\newtheorem{dfn}{Definition}[section]
\newtheorem{prop}[dfn]{Proposition}
\newtheorem{lem}[dfn]{Lemma}
\newtheorem{thm}[dfn]{Theorem}
\title{$SL_4$-Kloosterman sum via the Bruhat decomposition}
\author{Suzuho Osonoe,\ Maki Nakasuji}
\date{}
\begin{document}

\subjclass[2020]{Primary 11L05; Secondary 11F22}

\keywords{
Kloosterman sums, Weyl group, Reduced word decomposition}
\thanks{This work was supported by Japan Society for the Promotion of Science, Grant-in-Aid for Scientific Research No. 22K03274(M. Nakasuji).}

\maketitle

\begin{abstract}
  We define the Kloosterman sum for $SL_4$ over the Kloosterman set via the Bruhat decomposition and stratify the Kloosterman set using the reduced word decomposition of the Weyl group element. The Kloosterman sum for an $SL_4$-long word is decomposed into finer parts (called the fine Kloosterman sum), and can be written as a finite sum of a product of two classical Kloosterman sums.
\end{abstract}

\begin{section}{Introduction}

  The classical Kloosterman sum is defined by
  \begin{equation*}
    S(m,n;c)=\sum_{\substack{a,d\pmod c \\ ad\equiv1\pmod c}} e\left(\dfrac{ma+nd}{c}\right),
  \end{equation*}
  where $m,n,c\in \mathbb{Z}$ with $c>0$, and $e(x)=\exp(2\pi ix)$. This exponential sum was first introduced by H.\ D.\ Kloosterman in 1926. Since then, the Kloosterman sum have been deeply studied in such fields as spectral theory, modular forms and Poincaré series. For example, the Kloosterman sum appears as an exponential sum over $\gamma=\bigl(
\begin{smallmatrix}
   a & b \\
   c & d
\end{smallmatrix}
\bigl)\ \in SL_2(\mathbb{Z})$ in the computation of the Fourier coefficients of the classical Poincar\'{e} series.

  An important result concerning this sum is the Weil bound:
  \begin{equation}\label{weil}
    |S(m,n;c)|\leq (m,n,c)^{\frac{1}{2}}c^{\frac{1}{2}}\tau(c),
  \end{equation}
  where $(m,n,c)$ denotes the greatest common divisor, and $\tau(c)$ is the number of positive divisors of $c$. This bound enabled the effective use of Kloosterman sums in the study of various areas such as the Kuznetsov trace formula, Fourier expansions and L-functions.

  The classical Kloosterman sum may be defined as a sum over the Kloosterman set arising from the Bruhat decomposition of the group $SL_2(\mathbb{Z})$.
  In recent years, there have been attempts to generalize the Kloosterman sum to higher rank groups such as $GL_n$ and $SL_n$.
  In these generalizations, it is necessary to replace the original number-theoretic approach with a group-theoretic one, for which Bruhat decompositions and Weyl groups play key roles.
  In the case of $SL_3$, the definition and properties of the generalized Kloosterman sum have been studied, and its analytic structure has been clarified. Kiral and Nakasuji (\cite{KN}) have provided a stratification of the Kloosterman sum for $SL_3$ using the reduced word decomposition of the Weyl group element. Additionally, the Kloosterman sum for an $SL_3$-long word is decomposed into finer parts (and called the fine Kloosterman sum), which can be written as a finite sum of a product of two classical Kloosterman sums \cite[Theorem 5.10]{KN}.
  
  However, the structure of the Kloosterman sum for $SL_4$ has not yet been sufficiently studied.
  In the present paper, we attempt to construct and analyze the Kloosterman sum for an $SL_4$-long word, following the group-theoretic approach used for the $SL_3$ case in \cite{KN}.
  
  The paper is organized as follows. In Section 2, we review the Bruhat decomposition and the Weyl groups, which are essential for the group-theoretic approach to the Kloosterman sum. In particular, we give a preliminary discussion of the reduced word decomposition of the long word that will be used in the later sections. In Section 3, we define the Kloosterman sum for $SL_4$ and state the resulting properties.

\end{section}

\begin{section}{preliminaries}

  The Bruhat decomposition plays a key role in the definition and structure of a Kloosterman sum for $SL_n$. For $G=SL_n$, let $B\subset G$ denote the Borel subgroup consisting of upper triangular matrices. The Bruhat decomposition can be expressed using elements of the Weyl group $W$ as 
  \begin{equation*}
    G=\bigsqcup_{w\in W}BwB.
  \end{equation*}
  In the case of $SL_n$, the Weyl group $W$ can be identified with the symmetric group $S_n$.
  Additionally, any element $w\in W\cong S_n$ can be written as a product of simple transposition $s_i=(i,\ (i+1))$, for $1\leq i \leq n-1.$

  Let
  \begin{equation*}
    w_0=\begin{pmatrix}
   1 & 2 & 3 & \cdots & (n-1) & n \\
   \downarrow & \downarrow & \downarrow & \cdots & \downarrow & \downarrow \\
   n & (n-1) & (n-2) & \cdots & 2 & 1 \\
\end{pmatrix}\in W.
  \end{equation*}
  In the present paper, we stratify the Kloosterman set using the reduced word decomposition of the long word:
  \begin{equation*}
    w_0=s_1(s_2s_1)(s_3s_2s_1)\cdots(s_{n-1}\cdots s_1).
  \end{equation*}
  In terms of the matrix representation of $SL_n$, this is expressed as
  \begin{equation}\label{w_0}
    w_0=\begin{pmatrix}
    & & & & & (-1)^{n-1} \\
    & & & & \iddots &  \\
    & & & -1 & & \\
    & & 1 & & & \\
    & -1 & & & & \\
    1 & & & & & \\
\end{pmatrix}_{n\times n}.
  \end{equation}

  Let $U$ be the group of unipotent matrices and $T$ be the group of diagonal matrices. For a given $A\in UwTU$, we write its decomposition as follows.
  \begin{equation}\label{3.1} 
A=\begin{pmatrix}
  1 & a_{12} & a_{13} & \cdots & a_{1n}\\
   & 1 & a_{23} & \cdots & a_{2n}\\
   & & & \ddots & \vdots\\
   & & & & 1
\end{pmatrix}
w\begin{pmatrix}
  t_1 & & & \\
   & t_2 & & \\
   & & \ddots & \\
   & & & t_n
\end{pmatrix}
\begin{pmatrix}
  1 & b_{12} & b_{13} & \cdots & b_{1n}\\
   & 1 & b_{23} & \cdots & b_{2n}\\
   & & & \ddots & \vdots\\
   & & & & 1
\end{pmatrix}.
  \end{equation}
  The above decomposition is a finer version of the Bruhat decomposition, where $a_{ij},b_{ij}$ and $t_i$ can be expressed in terms of the entries of $A$ (\cite[Proposition 3.1]{KN}). For subsets $I=\{i_1<i_2<\cdots <i_k\}\subseteq\{1,\cdots,n\}$ and $J=\{j_1<j_2<\cdots <j_k\}\subseteq\{1,\cdots,n\}$, let $M_{I,J}$ be the determinant of the submatrix obtained by removing the $I$-th row and the $J$-th column from the original matrix, which we write as $M_{I,J}=\langle \mathbf{e}^*_{I},A\mathbf{e}_J \rangle$. 
  For example, let
  \begin{equation*}
    A=\begin{pmatrix}
      A_{11} & A_{12} & A_{13} & A_{14}\\
      A_{21} & A_{22} & A_{23} & A_{24}\\
      A_{31} & A_{32} & A_{33} & A_{34}\\
      A_{41} & A_{42} & A_{43} & A_{44}
    \end{pmatrix}.
  \end{equation*}
  Then, the minor $M_{234,123}(=\langle \mathbf{e}^*_{2,3,4},A\mathbf{e}_{1,2,3} \rangle)$ means
  \begin{equation*}
    \det\begin{pmatrix}
      A_{21} & A_{22} & A_{23}\\
      A_{31} & A_{32} & A_{33}\\
      A_{41} & A_{42} & A_{43}
    \end{pmatrix}.
  \end{equation*}

  For a given positive root $\alpha$, we define a map $\iota_{\alpha}:SL_2(\mathbb{R})\rightarrow SL_n(\mathbb{R})$ such that its image lies in the rank-one subgroup associated with $\alpha$.

  In the case of $SL_4$, for the simple roots $\alpha=(1,-1,0,0),\ \beta=(0,1,-1,0),\ \gamma=(0,0,1,-1)$ and $x=\left(
  \begin{smallmatrix}
   a&b\\
   c&d
   \end{smallmatrix}
   \right)\in GL_2$, we define
   \begin{equation}
  \iota_\alpha(x)=\begin{pmatrix}
  a&b&&\\
  c&d&&\\
  &&1&\\
  &&&1
\end{pmatrix},\quad \iota_\beta(x)=\begin{pmatrix}
  1&&&\\
  &a&b&\\
  &c&d&\\
  &&&1
\end{pmatrix},\quad \iota_\gamma(x)=\begin{pmatrix}
  1&&&\\
  &1&&\\
  &&a&b\\
  &&c&d
\end{pmatrix}.
\end{equation}
Using $\iota_*$ as above, we use $s_*$ to denote the following:
\begin{equation}\label{2.2}
  s_\alpha=\iota_{\alpha}\left(\begin{pmatrix}
  0 & -1\\
  1 & 0
\end{pmatrix}\right),\quad s_\beta=\iota_{\beta}\left(\begin{pmatrix}
  0 & -1\\
  1 & 0
\end{pmatrix}\right),\quad s_\gamma=\iota_{\gamma}\left(\begin{pmatrix}
  0 & -1\\
  1 & 0
\end{pmatrix}\right) .
\end{equation}
  These $s_*$ terms correspond to the simple transpositions $s_{\alpha}=(12),\ s_{\beta}=(23)$ and $ s_{\gamma}=(34)$ in the symmetric group $S_4$.
  
\end{section}

\begin{section}{$SL_4$-long word Kloosterman sum}

  \subsection{Kloosterman set and Kloosterman sum}

  In this section, following \cite[Section 4]{KN}, we define the Kloosterman sum for $SL_4(\mathbb{Z})$.

  For a vector $\mathbf{c}=\{c_1,c_2,c_3\}\in\mathbb{Z}^3$, we define 
  \begin{equation}\label{t(c)}
    t(\mathbf{c})=\begin{pmatrix}
      c_1 & & & \\
      & \frac{c_2}{c_1} & & \\
      & & \frac{c_3}{c_2} & \\
      & & & \frac{1}{c_3}
    \end{pmatrix},
  \end{equation}
  and
  \begin{equation}\label{omega}
    \Omega_{\omega}(\mathbf{c})=\{A\in SL_4(\mathbb{Z}) \ |\  A=u_L\omega t(\mathbf{c})u_R\in B\omega B,\ u_L,u_R\in U\}.
  \end{equation}

  Let $\mathbf{n}=\{n_1,n_2,n_3\}\in\mathbb{Z}^3$. We use the character $\psi_\mathbf{n}$ to denote
  \begin{equation*}
    \psi_{\mathbf{n}}(u)=e(n_1 u_{1,2}+n_2u_{2,3}+n_3u_{3,4}),
  \end{equation*}
  where $u=(u_{i,j})$ is a unipotent matrix, and $e(x)=\exp(2\pi ix)$.

  For $\mathbf{m},\mathbf{n},\mathbf{c}\in\mathbb{Z}^3$, we define the usual (coarse) Kloosterman sum for $SL_4(\mathbb{Z})$ as
  \begin{equation}
    S_\omega(\mathbf{m},\mathbf{n};\mathbf{c})=\sum_{\substack{A\in U(\mathbb{Z})\backslash\Omega_{\omega}(\mathbf{c})/U(\mathbb{Z})\\A=u_L\omega t(\mathbf{c})u_R}} \psi_{\mathbf{m}}(u_L)\psi_{\mathbf{n}}(u_R).
  \end{equation}

  From here on, we consider the Kloosterman sum for $SL_4$ for the case of $w=w_0$ with \eqref{w_0} which is the long word in the Weyl group, appearing in the decomposition of $A$ in \eqref{omega}.

  For a given $A\in SL_4(\mathbb Z)\cap Bw_0B$, we write
  \begin{equation}\label{A}
    A=\begin{pmatrix}
      A_{11} & A_{12} & A_{13} & A_{14}\\
      A_{21} & A_{22} & A_{23} & A_{24}\\
      A_{31} & A_{32} & A_{33} & A_{34}\\
      A_{41} & A_{42} & A_{43} & A_{44}
    \end{pmatrix}.
  \end{equation}

  In addition, for a given $A\in SL_4 \cap Uw_0TU$, we write $A$ as in \eqref{3.1}:
  \begin{equation}\label{3.1'}
A=\begin{pmatrix}
  1 & a_{12} & a_{13} & a_{14}\\
   & 1 & a_{23} & a_{24}\\
   & & 1 & a_{34} \\
   & & & 1
\end{pmatrix}
w_0\begin{pmatrix}
  t_1 & & & \\
   & t_2 & & \\
   & & t_3 & \\
   & & & t_4
\end{pmatrix}
\begin{pmatrix}
  1 & b_{12} & b_{13} & b_{14}\\
   & 1 & b_{23} & b_{24}\\
   & & 1 & b_{34}\\
   & & & 1
\end{pmatrix}.
  \end{equation}
  The diagonal matrix $T$ in \eqref{3.1'} becomes $t(\mathbf{c})$ in \eqref{t(c)} when
  \begin{equation}\label{c}
    c_1=t_1,\quad c_2=t_1t_2,\quad c_3=t_1t_2t_3.
  \end{equation}

  From \cite[Proposition 3.1]{KN}, we obtain
  \begin{equation}\label{t}
    t_1=A_{41},\quad t_1t_2=M_{34,12}=\begin{vmatrix}
      A_{31} & A_{32}\\
      A_{41} & A_{42}
    \end{vmatrix},\quad t_1t_2t_3=M_{234,123}=\begin{vmatrix}
      A_{21} & A_{22} & A_{23}\\
      A_{31} & A_{32} & A_{33}\\
      A_{41} & A_{42} & A_{43}
    \end{vmatrix}.
  \end{equation}

  \begin{lem}\label{lem5.1}
    For a given $A\in SL_4(\mathbb Z)\cap Bw_0B$ as above, we have
$$\gcd(A_{41},A_{42},A_{43})=\gcd(M_{234,123},M_{134,123},M_{124,123}).$$
  \end{lem}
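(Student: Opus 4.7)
My plan is to prove the equality by establishing both divisibilities. Write $d = \gcd(A_{41}, A_{42}, A_{43})$ and $e = \gcd(M_{234,123}, M_{134,123}, M_{124,123})$, and let $v_1, v_2, v_3, v_4$ denote the columns of $A$.

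For the direction $d \mid e$, I would observe that in each of the three $3 \times 3$ submatrices defining $M_{234,123}$, $M_{134,123}$, $M_{124,123}$, the bottom row is exactly $(A_{41}, A_{42}, A_{43})$, since $4$ is the largest element of each of the row-index sets $\{2,3,4\}$, $\{1,3,4\}$, $\{1,2,4\}$. Cofactor expansion of each minor along this last row writes it as a $\mathbb{Z}$-linear combination of $A_{41}, A_{42}, A_{43}$, which immediately gives $d \mid M_{I,123}$ for all three $I$, hence $d \mid e$.

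For the reverse direction $e \mid d$, I plan to exploit two Laplace-type identities. First, for each $k \in \{1,2,3\}$ the $4 \times 4$ matrix $[v_k\ v_1\ v_2\ v_3]$ has a repeated column, so its determinant vanishes; expanding along the first column and identifying the four resulting $3 \times 3$ cofactors with the minors $M_{234,123}, M_{134,123}, M_{124,123}, M_{123,123}$ of $[v_1\ v_2\ v_3]$ produces
\[
A_{4k}\,M_{123,123} = A_{1k}\,M_{234,123} - A_{2k}\,M_{134,123} + A_{3k}\,M_{124,123},
\]
so $e$ divides $A_{4k}\,M_{123,123}$ for $k = 1, 2, 3$. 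Second, Laplace expansion of $\det A = 1$ along the fourth column of $A$ gives
\[
-A_{14}\,M_{234,123} + A_{24}\,M_{134,123} - A_{34}\,M_{124,123} + A_{44}\,M_{123,123} = 1,
\]
which in particular forces $\gcd(e,\, M_{123,123}) = 1$. A Bezout argument then upgrades $e \mid A_{4k}\,M_{123,123}$ to $e \mid A_{4k}$ for each $k \in \{1,2,3\}$, yielding $e \mid d$.

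The two divisibilities together deliver $d = e$. The main conceptual step — and the one most worth stating carefully — is recognizing that the vanishing determinant $\det[v_k\ v_1\ v_2\ v_3] = 0$ is exactly the mechanism that expresses $A_{4k}$ in terms of the three target minors (up to the factor $M_{123,123}$, which is then eliminated by coprimality coming from $\det A = 1$). The rest is routine bookkeeping of signs in the Laplace expansions; notably, the $Bw_0B$ hypothesis does not appear to enter the argument and the identity seems to hold for all of $SL_4(\mathbb{Z})$.
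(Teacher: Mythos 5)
Your proof is correct, and while the easy direction ($d \mid e$ via cofactor expansion of each minor along the row $(A_{41},A_{42},A_{43})$) coincides with the paper's, your argument for $e \mid d$ takes a genuinely different route. The paper invokes the Jacobi/compound-matrix identity for $\det A = 1$, writing each entry $A_{4k}$ directly as a $3\times 3$ determinant whose first column is $(M_{234,123}, M_{134,123}, M_{124,123})^{\top}$; expansion along that column then exhibits $A_{4k}$ as an integer combination of the three target minors, giving $e \mid A_{4k}$ in one step with no coprimality argument. You instead use the elementary vanishing-determinant relation $\det[v_k\ v_1\ v_2\ v_3]=0$ to get $e \mid A_{4k}M_{123,123}$, and then remove the factor $M_{123,123}$ by observing that the column expansion of $\det A = 1$ forces $\gcd(e, M_{123,123})=1$ (this coprimality step is essentially the column-wise analogue of the paper's later Lemma 3.4). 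The trade-off is that the paper's route is shorter but rests on the unproved adjugate identity, whereas yours uses only first principles at the cost of an extra Bezout step; your closing observation that the $Bw_0B$ hypothesis is not needed and the identity holds on all of $SL_4(\mathbb{Z})$ is also accurate.
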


  \begin{proof}
  Let $f_1=\gcd(A_{41},A_{42},A_{43})$ and $f_2=\gcd(M_{234,123},M_{134,123},M_{124,123})$. Each minor of $f_2$ includes $[A_{41}\ A_{42}\ A_{43}$]. This proves $f_1|f_2$.\par
  For the converse, since $|A|=1$, we can write
    $$A_{41}=\begin{vmatrix}
      M_{234,123} & M_{234,134} & M_{234,124}\\
      M_{134,123} & M_{134,134} & M_{134,124}\\
      M_{124,123} & M_{124,134} & M_{124,124}
    \end{vmatrix},\ A_{42}=\begin{vmatrix}
      M_{234,123} & M_{234,234} & M_{234,124}\\
      M_{134,123} & M_{134,234} & M_{134,124}\\
      M_{124,123} & M_{124,234} & M_{124,124}
    \end{vmatrix},$$
    $$A_{43}=\begin{vmatrix}
      M_{234,123} & M_{234,234} & M_{234,134}\\
      M_{134,123} & M_{134,234} & M_{134,134}\\
      M_{124,123} & M_{124,234} & M_{124,134}
    \end{vmatrix}.$$
    Each minor of $f_2$ appears in the first column of each determinant. This means that $f_2|f_1$.
  \end{proof}

  We set
  \begin{equation}
    f=\gcd(A_{41},A_{42},A_{43})=\gcd(M_{234,123},M_{134,123},M_{124,123}).
  \end{equation}
  We define $D_1,D_2,D_3\in \mathbb{Z}_{>0}$  as follows using \eqref{c} and \eqref{t},
  \begin{equation}\label{5.1}
    c_1=A_{41}=D_1f,\quad c_2=M_{34,12}=D_2f,\quad c_3=M_{234,123}=D_3f.
  \end{equation}
  Furthermore, we define the following set as the fine Kloosterman set
  \begin{eqnarray*}\label{5.2}
    \Omega(D_1,D_2,D_3,f)= \{A\in SL_4(\mathbb{Z})\ |\ \gcd(A_{41},A_{42},A_{43})=f,\ A_{41}=D_1f,\quad\quad\quad\quad\quad\\ M_{34,12}=D_2f,\ M_{234,123}=D_3f\}.
  \end{eqnarray*}

  \begin{dfn}
    For $\mathbf{m},\mathbf{n}\in\mathbb{Z}^3$, we define the fine Kloosterman sum for $SL_4$ as follows, using $D_1,D_2,D_3$ and $f$ defined above.
    \begin{equation}\label{5.4}
      \mathcal{S}_{\omega_0}(\mathbf{m},\mathbf{n};D_1,D_2,D_3,f)=\sum_{\substack{A\in U(\mathbb{Z})\backslash\Omega(D_1,D_2,D_3,f)/U(\mathbb{Z})\\A=u_L\omega t(D_1f,D_2f,D_3f)u_R}} \psi_{\mathbf{m}}(u_L)\psi_{\mathbf{n}}(u_R).
    \end{equation}
  \end{dfn}

  Similarly as before, the (usual) coarse Kloosterman sum for $SL_4$ can thus be written as the sum
  \begin{equation}\label{fine}
    S_{\omega_0}(\mathbf{m},\mathbf{n};(c_1,c_2,c_3))=\sum_{f|(c_1,c_2,c_3)} \mathcal{S}_{\omega_0}\left(\mathbf{m},\mathbf{n};\dfrac{c_1}{f},\dfrac{c_2}{f},\dfrac{c_3}{f},f\right).
  \end{equation}
  From \eqref{fine}, the properties of the fine Kloosterman sum are expected to give the properties of (usual) coarse Kloosterman sum.

  \subsection{Properties of fine Kloosterman sum}

  In this subsection, we consider the parametrization of the fine Kloosterman cells, and discuss the properties of the fine Kloosterman sum. Using the reduced word decomposition $\omega_0=s_\alpha s_\beta s_\alpha s_\gamma s_\beta s_\alpha$ with the notation in \eqref{2.2}, we parametrize the fine Kloosterman sets $\Omega(D_1,D_2,D_3,f)$:
  \begin{equation}\label{5.6}
    A=\iota_{\alpha}(\gamma_1)\iota_{\beta}(\gamma_3)\iota_{\alpha}(\gamma_2)\iota_{\gamma}(\gamma_6)\iota_{\beta}(\gamma_5)\iota_{\alpha}(\gamma_4),
  \end{equation}
  where
  \begin{equation*}
    \gamma_i=\begin{pmatrix}
      x_i & b_i\\
      d_i & y_i
    \end{pmatrix}\ (\text{if}\ 1\leq i \leq 5),\quad
    \gamma_6=\begin{pmatrix}
      x_6 & b_6\\
      f & y_6
    \end{pmatrix}.
  \end{equation*}
  Then we can write
  \begin{equation}\label{D}
    D_1=d_4d_5,\quad D_2=d_2d_3d_5,\quad D_3=d_1d_3.
  \end{equation}

  The calculation of \eqref{5.6} leads to
  $$A_{41}=d_4d_5f,\quad A_{42}=d_5fy_4,\quad A_{43}=fy_5,$$
  $$M_{234,123}=d_1d_3f,\quad M_{134,123}=d_3fx_1,\quad M_{124,123}=fx_3.$$

  If
  $$\gcd(A_{41},A_{42})=d_5f,\quad \gcd(M_{234,123},M_{134,123})=d_3f,$$
  then $\gcd(y_4,d_4)=1$,\ $\gcd(x_1,d_1)=1$,\ $\gcd(y_5,d_5)=1$ and $\gcd(x_3,d_3)=1$. In other words, $y_4$ is defined up to modulo $d_4$, $x_1$ is defined up to modulo $d_1$, $y_5$ is defined up to modulo $d_5$ and $x_3$ is defined up to modulo $d_3$.

  \begin{lem}\label{lem5.3}
    For $A\in SL_4(\mathbb{Z})$, we have $A_{44}M_{123,123}\equiv 1 \pmod f$. In particular, $\gcd(A_{44},f)=\gcd(M_{123,123},f)=1.$
  \end{lem}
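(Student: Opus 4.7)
The plan is to deduce the congruence straight from the cofactor (Laplace) expansion of $\det A = 1$ along the last row of $A$. Since the paper uses $M_{I,J}$ for the determinant of the submatrix whose rows are indexed by $I$ and columns by $J$, the minors appearing when we delete row $4$ and column $j$ are $M_{123,\{1,2,3,4\}\setminus\{j\}}$.

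First, I would write down the expansion
\begin{equation*}
1 = \det A = -A_{41}\,M_{123,234} + A_{42}\,M_{123,134} - A_{43}\,M_{123,124} + A_{44}\,M_{123,123},
\end{equation*}
checking the signs $(-1)^{4+j}$ for $j=1,2,3,4$. Next I would reduce this identity modulo $f$. By the very definition of $f = \gcd(A_{41},A_{42},A_{43})$, each of the first three summands is divisible by $f$, so all three vanish modulo $f$ and we are left with
\begin{equation*}
A_{44}\,M_{123,123} \equiv 1 \pmod{f},
\end{equation*}
which is the claimed congruence. The coprimality statements $\gcd(A_{44},f) = \gcd(M_{123,123},f) = 1$ are then immediate, since any common factor of $A_{44}$ and $f$ (resp.\ $M_{123,123}$ and $f$) would divide the left-hand side and hence divide $1$.

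There is essentially no obstacle here beyond bookkeeping: the only thing to be careful about is using the convention for $M_{I,J}$ consistent with the example given after \eqref{3.1} (so that $M_{123,123}$ really is the top-left $3\times 3$ minor and \emph{not} the minor obtained by deleting rows/columns $1,2,3$), and making sure the signs in the cofactor expansion along row $4$ are correctly $-,+,-,+$. Everything else is automatic from $\det A = 1$ and the divisibility of $A_{41}, A_{42}, A_{43}$ by $f$.
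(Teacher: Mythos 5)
Your proof is correct and follows exactly the paper's own argument: Laplace expansion of $\det A=1$ along the last row, followed by reduction modulo $f=\gcd(A_{41},A_{42},A_{43})$. Your attention to the sign pattern and to the indexing convention for $M_{I,J}$ is sound (and in fact your displayed expansion is cleaner than the paper's, which contains a typographical slip repeating $M_{123,134}$ where $M_{123,124}$ is meant).
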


  \begin{proof}
    Expanding the determinant of $A$ along the last row, we obtain
    \begin{eqnarray*}
      |A|&=&-A_{41}M_{123,234}+A_{42}M_{123,134}-A_{43}M_{123,134}+A_{44}M_{123,123}\\
      &=& f\{-{d_1}'M_{123,234}+(A_{42}/f)M_{123,134}-(A_{43}/f)M_{123,124}\}+A_{44}M_{123,123}.
    \end{eqnarray*}
  \end{proof}

  The calculation of \eqref{5.6} leads to $A_{44}=y_6$ and $M_{123,123}=x_6$. Therefore, by Lemma \ref{lem5.3}, we have $x_6y_6\equiv 1\pmod f$.

  From \cite[Proposition 3.1]{KN}, for $A\in SL_4(\mathbb{Z})\cap u_L\omega_0Tu_R$, we know that
  $$u_L=\begin{pmatrix}
    1 & \dfrac{\langle e^*_{1,3,4},Ae_{1,2,3}\rangle}{t_1t_2t_3} & \dfrac{\langle e^*_{1,4},Ae_{1,2}\rangle}{t_1t_2} & \dfrac{\langle e^*_{1},Ae_{1}\rangle}{t_1}\\
    & 1 & \dfrac{\langle e^*_{2,4},Ae_{1,2}\rangle}{t_1t_2} & \dfrac{\langle e^*_{2},Ae_{1}\rangle}{t_1}\\
    & & 1 & \dfrac{\langle e^*_{3},Ae_{1}\rangle}{t_1}\\
    & & & 1
  \end{pmatrix},$$
  and
  $$u_R=\begin{pmatrix}
    1 & \dfrac{\langle e^*_{4},Ae_{2}\rangle}{t_1} & \dfrac{\langle e^*_{4},Ae_{3}\rangle}{t_1} & \dfrac{\langle e^*_{4},Ae_{4}\rangle}{t_1}\\
    & 1 & \dfrac{\langle e^*_{3,4},Ae_{1,3}\rangle}{t_1t_2} & \dfrac{\langle e^*_{3,4},Ae_{1,4}\rangle}{t_1t_2} \\
    & & 1 & \dfrac{\langle e^*_{2,3,4},Ae_{1,2,4}\rangle}{t_1t_2t_3} \\
    & & & 1
  \end{pmatrix}.$$

  Defining $u_1=d_1x_2+d_2x_3y_1,\ u_2=d_2x_4+d_4x_5y_2,\ v_1=d_1x_2y_3+d_2y_1,\ v_2=d_2x_4y_5+d_4y_2$, we have the following proposition.

  \begin{prop}\label{prop5.6}
    For a given $A\in SL_4(\mathbb{Z})$, we choose $d_1,d_2,d_3,d_4,d_5,f$ as in \eqref{5.1} and \eqref{D}. Then for $u_1,u_2,v_1,v_2\in\mathbb{Z}$ as above, the Bruhat decomposition of $A$ can be written as
    \begin{equation}\label{5.9}
      A=u_L\begin{pmatrix}
        &&&-1\\
        &&1&\\
        &-1&&\\
        1&&&
      \end{pmatrix}\begin{pmatrix}
        d_4d_5f&&&\\
        &\frac{d_2d_3}{d_4}&&\\
        &&\frac{d_1}{d_2d_5}&\\
        &&&\frac{1}{d_1d_3f}
      \end{pmatrix}u_R,
    \end{equation}
    where
    $$u_L=\begin{pmatrix}
      1 & \dfrac{x_1}{d_1} & \dfrac{x_1u_1-d_2x_3}{d_1d_2d_3} & \dfrac{x_1S-d_2x_3T+d_2d_4d_5x_6}{d_1d_2d_3d_4d_5f}\\
      & 1 & \dfrac{u_1}{d_2d_3} & \dfrac{S}{d_2d_3d_4d_5f}\\
      & & 1 & \dfrac{T}{d_4d_5f}\\
      & & & 1
    \end{pmatrix}$$
    and
    $$u_R=\begin{pmatrix}
      1 & \dfrac{y_4}{d_4} & \dfrac{y_5}{d_4d_5} & \dfrac{y_6}{d_4d_5f}\\
      & 1 & \dfrac{v_2}{d_2d_5} & \dfrac{d_4d_5y_3+d_3y_6u_2}{d_2d_3d_5f}\\
      & & 1 & \dfrac{d_5v_1+d_1d_3x_5y_6}{d_1d_3f}\\
      & & & 1
    \end{pmatrix},$$
    in which $T=d_3u_2+d_4d_5x_6y_3,\ S=d_2d_4d_5x_6y_1(x_3y_3-1)+d_3(u_1u_2-d_1d_4x_5)$.
  \end{prop}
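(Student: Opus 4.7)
The strategy is to invoke \cite[Proposition 3.1]{KN}, which expresses each entry of $u_L$ and $u_R$ as a prescribed minor of $A$ divided by the appropriate product of $t_i$'s (these are exactly the formulas displayed just above the proposition). The proposition then reduces to (a) verifying the diagonal and Weyl factors in \eqref{5.9}, and (b) computing the relevant minors of $A$ from the parametrization \eqref{5.6}.

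The diagonal and Weyl pieces are essentially bookkeeping: from \eqref{c}, \eqref{5.1}, and \eqref{D} one reads off $t_1 = d_4 d_5 f$, $t_2 = d_2 d_3/d_4$, $t_3 = d_1/(d_2 d_5)$, and $t_4 = 1/(d_1 d_3 f)$, matching the diagonal matrix displayed, while the anti-diagonal factor is exactly $w_0$ from \eqref{w_0} for $n=4$. Several of the required minors are already recorded in the lead-up to the proposition: the entries $A_{41}, A_{42}, A_{43}, A_{44} = y_6$, the three $3\times 3$ minors $M_{\cdot,123}$, and $M_{123,123} = x_6$. What remains is to compute $A_{31}, A_{21}, A_{11}$, the four $2\times 2$ minors appearing in $u_L, u_R$, and the two $3\times 3$ minors $\langle e^*_{1,3,4}, A e_{1,2,3}\rangle$ and $\langle e^*_{2,3,4}, A e_{1,2,4}\rangle$.

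To organize the computation, I would split \eqref{5.6} as $A = PQ$ with $P = \iota_\alpha(\gamma_1)\iota_\beta(\gamma_3)\iota_\alpha(\gamma_2)$ and $Q = \iota_\gamma(\gamma_6)\iota_\beta(\gamma_5)\iota_\alpha(\gamma_4)$, compute $P$ and $Q$ directly (each involves only three $\iota_*$ factors and has sparse block structure: $P$'s fourth row and $Q$'s first column are trivial), and then extract each minor of $A$ via Cauchy--Binet, $\langle e^*_I, A e_J\rangle = \sum_{|K|=|I|} \langle e^*_I, P e_K\rangle \langle e^*_K, Q e_J\rangle$, where the support restrictions keep the sums short. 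The auxiliary quantity $u_1$ appears as a specific entry of $P$ (one checks directly $P_{21} = d_1 x_2 + d_2 x_3 y_1 = u_1$), and $v_1, u_2, v_2$ emerge analogously from entries of $P$ and $Q$; the composite quantities $T$ and $S$ are then read off from the two- and three-term Cauchy--Binet expansions of the relevant $2\times 2$ and $3\times 3$ minors of $A$.

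The main obstacle is the bookkeeping in the longest expressions, namely $S$ and the $(1,4)$-entry of $u_L$, where many monomials appear before cancellation. A useful consistency check is that the parameters $b_i$ must drop out of every final formula: the relation $\det\gamma_i = x_i y_i - b_i d_i = 1$ (forced by $A \in SL_4(\mathbb{Z})$, noting that $\det \gamma_6 = x_6 y_6 - b_6 f = 1$) allows one to substitute $b_i d_i = x_i y_i - 1$, and any residual dependence on $b_i$ not of this form signals an arithmetic error.
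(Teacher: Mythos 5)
Your proposal matches the paper's (implicit) argument: the proposition is stated without a written proof, the intended justification being exactly the quoted formulas from \cite[Proposition 3.1]{KN} expressing the entries of $u_L$ and $u_R$ as minors of $A$ divided by products of the $t_i$, combined with direct computation of those minors from the parametrization \eqref{5.6}; your $P$--$Q$ split with Cauchy--Binet is just a clean way to organize that computation, and spot checks (e.g.\ $M_{234,123}=d_1d_3f$ and $\langle e^*_{1,4},Ae_{1,2}\rangle=(x_1x_2+b_1d_2x_3)d_5f$, whence $(u_L)_{13}=(x_1u_1-d_2x_3)/(d_1d_2d_3)$) confirm it goes through. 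One small correction to your consistency check: $\det\gamma_i=1$ is forced not by $\det A=1$ (which only constrains the product of the determinants) but by the fact that $\iota_*$ is defined on $SL_2$, so each $\gamma_i$ is an $SL_2$-matrix by construction.
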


  \begin{prop}
    For given nonzero integers $d_1,d_2,d_3,d_4,d_5,f$ and $x_1\pmod{d_1}$,\ $y_4\pmod{d_4}$,\ $x_3\pmod{d_3}$,\ $y_5\pmod{d_5}$, the product in \eqref{5.9} gives rise to an integral matrix if and only if the following congruence conditions are satisfied.
  \begin{equation}\label{A11}
    d_2d_4d_5x_6-Td_2x_3+Sx_1\equiv0 \pmod{d_1d_2d_3}
  \end{equation}
  \begin{equation}\label{A21}
    S\equiv0 \pmod{d_2d_3}
  \end{equation}
  \begin{equation}\label{A12}
    y_4(d_2d_4d_5x_6-Td_2x_3+Sx_1)+d_2d_3(d_2x_3-x_1u_1)\equiv0 \pmod{d_1d_2d_3d_4}
  \end{equation}
  \begin{equation}
    Sy_4-d_2d_3u_1\equiv0 \pmod{d_2d_3d_4}
  \end{equation}
  \begin{equation}\label{A32}
    Ty_4-d_2d_3\equiv0 \pmod{d_4}
  \end{equation}
  \begin{equation}
    y_5(d_2d_4d_5x_6-Td_2x_3+Sx_1)+v_2d_3(d_2x_3-x_1u_1)+d_1d_3d_4x_1\equiv0 \pmod{d_1d_2d_3d_4d_5}
  \end{equation}
  \begin{equation}
    d_1d_3d_4-d_3u_1v_2+Sy_5\equiv0 \pmod{d_2d_3d_4d_5}
  \end{equation}
  \begin{equation}\label{A33}
    Ty_5-d_3v_2\equiv0 \pmod{d_4d_5}
  \end{equation}
  \begin{equation}
    -(d_3u_2y_6+d_4d_5y_3)+Ty_6\equiv0 \pmod{d_4d_5f}
  \end{equation}
  \begin{equation}\label{A24}
    d_1d_3d_4x_5y_6-u_1(d_3u_2y_6+d_4d_5y_3)+d_4d_5v_1+Sy_6\equiv0 \pmod{d_2d_3d_4d_5f}
  \end{equation}
  \begin{eqnarray}\label{A14}
    x_1(d_1d_3d_4x_5y_6-u_1(d_3u_2y_6+d_4d_5y_3)+d_4d_5v_1+Sy_6)+d_2d_4d_5x_3y_3+d_2d_4d_5\quad\quad\nonumber\\\times(x_6y_6-1)+d_2d_3u_2x_3y_6-Td_2x_3y_6\equiv0\pmod{d_1d_2d_3d_4d_5f}
  \end{eqnarray}
  \end{prop}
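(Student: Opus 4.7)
The plan is to carry out the matrix product in \eqref{5.9} with the explicit rational entries of $u_L$ and $u_R$ furnished by Proposition \ref{prop5.6}, and to read off, for each position $(i,j)$, the single divisibility condition under which $A_{ij}$ lies in $\mathbb{Z}$. The labels \eqref{A11}, \eqref{A21}, \eqref{A12}, \eqref{A32}, \eqref{A33}, \eqref{A24}, \eqref{A14} telegraph which entry of $A$ each listed congruence is meant to control, so the task reduces to locating the correct numerator and denominator in each entry of the product. Since the refined Bruhat decomposition is unique (the parameters $d_i, x_i, y_i, f$ are recovered from minors of $A$ as recalled just before Proposition \ref{prop5.6}), integrality of $A$ is equivalent to the conjunction of the individual entry conditions, with no hidden redundancy between them.

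First I would discharge the entries that are automatically integral. The bottom row is pinned down by \eqref{5.1} together with the identities $A_{41}=d_4d_5f$, $A_{42}=d_5fy_4$, $A_{43}=fy_5$, $A_{44}=y_6$ already recorded after \eqref{5.6}. Several other positions inherit their denominators from the structure of $u_L w_0 t$ and impose no new constraint beyond \eqref{D} and the coprimality conditions $\gcd(y_4,d_4)=\gcd(x_1,d_1)=\gcd(y_5,d_5)=\gcd(x_3,d_3)=1$ that have already been established. For each remaining position I would expand the product and simplify: in every case the entry emerges as a single fraction whose denominator is a monomial in the $d_i$ and $f$, with the numerator precisely the left-hand side of one listed congruence. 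For example, $A_{11}$ produces the fraction with denominator $d_1d_2d_3$ whose integrality gives \eqref{A11}, $A_{21}$ yields $S/(d_2d_3)$ giving \eqref{A21}, and the third-row entries in columns two and three contribute \eqref{A32} and \eqref{A33}.

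The main obstacle is the bookkeeping for the last two conditions, \eqref{A24} and \eqref{A14}, whose numerators involve $S$ and $T$ simultaneously together with cross-terms in $x_6 y_6 - 1$ and $x_3 y_3 - 1$. Here the relation $x_6 y_6 \equiv 1 \pmod f$ supplied by Lemma \ref{lem5.3} is the key simplification that allows the factor $f$ in the denominators $d_2 d_3 d_4 d_5 f$ and $d_1 d_2 d_3 d_4 d_5 f$ to be cleared. A useful tactic is to decompose $u_L$ as a product of elementary unipotent factors corresponding to the reduced word $s_\alpha s_\beta s_\alpha s_\gamma s_\beta s_\alpha$ of $w_0$: several intermediate cancellations then telescope, and each listed congruence emerges as the sole obstruction to clearing the denominator in the associated entry. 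The converse direction is automatic: assuming all the congruences, the same entry-by-entry computation exhibits $A$ as an integral matrix, which completes the equivalence.
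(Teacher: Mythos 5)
Your approach---expanding the product $u_Lw_0\,t\,u_R$ entry by entry and reading off, for each of the eleven entries that are not automatically integral (the fourth row and $A_{31}=T$ are the only automatic ones), the divisibility of its numerator by its monomial denominator---is exactly the intended argument; the paper states the proposition without proof, and the eleven listed congruences are precisely those raw numerators, so your plan reproduces the statement. One caution: you neither need nor should invoke Lemma \ref{lem5.3} to handle \eqref{A24} and \eqref{A14}, since those conditions already carry the unsimplified term $d_2d_4d_5(x_6y_6-1)$, and using $x_6y_6\equiv1\pmod f$ in the ``if'' direction would be circular---that congruence is itself deduced from $A$ being an integral matrix of determinant one, which is what you are trying to characterize.
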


  In what follows, we will assume $x_1$ and $y_4$ are chosen to be relatively prime to $d_1d_2d_3d_4d_5f$ and we will denote the inverses of $x_1$ and $y_4$ by $\bar{x_1}$ and $\bar{y_4}$, respectively.

  \begin{lem}\label{prop5.7}
    For $x_1,x_3,y_4$ relatively prime to $d_1d_2d_3d_4d_5f$ such that $x_1\bar{x_1}\equiv1,\ y_4\bar{y_4}\equiv1$ and $x_3y_3\equiv1$, the following congruence conditions are satisfied.
    \begin{eqnarray}
      d_2d_3u_1 &\equiv& d_2^2d_3\bar{x_1}x_3 \pmod{d_1d_2d_3} \label{1}\\
      d_3u_2+d_4d_5x_6y_3 &\equiv& d_2d_3\bar{y_4} \pmod{d_4} \label{2}\\
      d_3v_2 &\equiv& d_2d_3\bar{y_4}y_5 \pmod{d_4} \label{3}\\
      d_2d_3d_4(d_5v_1+d_1d_3x_5y_6) &\equiv& d_2^2d_3d_4d_5\bar{x_1} \pmod{d_2d_3} \label{4}
    \end{eqnarray}
  \end{lem}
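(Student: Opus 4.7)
The plan is to derive each of the four congruences by a short direct calculation combining two inputs: the $SL_2$ identity $x_iy_i\equiv 1\pmod{d_i}$ satisfied by each rank-one factor $\gamma_i$ in the parametrization \eqref{5.6}, and the integrality congruences \eqref{A11}--\eqref{A14} from the previous proposition. The hypothesis that $x_1$ and $y_4$ are coprime to the full product $d_1d_2d_3d_4d_5f$ guarantees that $\bar{x_1}$ and $\bar{y_4}$ are genuine inverses modulo every divisor of that product; in particular $y_1\equiv \bar{x_1}\pmod{d_1}$ and $x_4\equiv \bar{y_4}\pmod{d_4}$ follow from the $SL_2$ relations on $\gamma_1$ and $\gamma_4$.

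For \eqref{1}, I would substitute $u_1=d_1x_2+d_2x_3y_1$ and reduce modulo $d_1$: the first term vanishes and the second becomes $d_2x_3\bar{x_1}$, giving $u_1\equiv d_2x_3\bar{x_1}\pmod{d_1}$. Multiplying both sides by $d_2d_3$ lifts this to $d_2d_3u_1\equiv d_2^2d_3\bar{x_1}x_3\pmod{d_1d_2d_3}$, which is \eqref{1}. Congruence \eqref{4} is obtained in exactly the same spirit: expand $v_1=d_1x_2y_3+d_2y_1$, reduce $d_5v_1+d_1d_3x_5y_6$ modulo $d_1$ using $y_1\equiv \bar{x_1}\pmod{d_1}$ to get $d_2d_5\bar{x_1}$, and then multiply through by $d_2d_3d_4$.

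For \eqref{2}, the integrality condition \eqref{A32} says precisely $Ty_4\equiv d_2d_3\pmod{d_4}$, since $T=d_3u_2+d_4d_5x_6y_3$; multiplying by $\bar{y_4}$ yields \eqref{2} immediately. Congruence \eqref{3} then follows by starting from \eqref{A33}, namely $Ty_5\equiv d_3v_2\pmod{d_4d_5}$, reducing to $\pmod{d_4}$, and substituting the expression for $T$ just obtained in \eqref{2}.

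The argument contains no genuinely delicate step; the main source of friction is purely bookkeeping, namely remembering to lift each reduced-mod-$d_1$ (or mod-$d_4$) identity back to the stated modulus by multiplying both sides by the appropriate $d_i$-factors, and to verify that the coprimality hypothesis on $x_1,y_4$ does supply the needed inverses at the relevant levels. The cleanest organization treats \eqref{1} and \eqref{4} as two liftings of the single fact $y_1\equiv \bar{x_1}\pmod{d_1}$, and treats \eqref{2} and \eqref{3} as consequences of \eqref{A32} and \eqref{A33}, respectively.
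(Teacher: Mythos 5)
Your argument is correct, and for \eqref{2} and \eqref{3} it coincides with the paper's proof (multiply \eqref{A32} by $\bar{y_4}$, then feed the result into \eqref{A33} reduced mod $d_4$). For \eqref{1} and \eqref{4}, however, you take a genuinely different and shorter route: you compute directly from the definitions $u_1=d_1x_2+d_2x_3y_1$ and $v_1=d_1x_2y_3+d_2y_1$, reduce mod $d_1$, and use $y_1\equiv\bar{x_1}\pmod{d_1}$, whereas the paper never touches $y_1$ and instead extracts these congruences from the integrality conditions: \eqref{1} comes from substituting \eqref{A11} into \eqref{A12} and multiplying by $\bar{x_1}$, and \eqref{4} from a longer chain combining \eqref{A21}, \eqref{A24}, \eqref{A11} and the hypothesis $x_3y_3\equiv1$. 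Your route is cleaner and in fact proves \eqref{4} modulo $d_1d_2d_3d_4$, which is stronger than the stated (and, as stated, trivially true) congruence mod $d_2d_3$. The one point you should make explicit is the source of $y_1\equiv\bar{x_1}\pmod{d_1}$: it requires $x_1y_1-b_1d_1=\det\gamma_1=1$, i.e.\ that the factors $\gamma_i$ in \eqref{5.6} lie in $SL_2(\mathbb{Z})$. That is implicit in the definition of $\iota_\alpha$ as a map out of $SL_2$, but it is not among the lemma's stated hypotheses, and the paper deliberately avoids invoking it (note that even $x_6y_6\equiv1\pmod f$ is derived there from a cofactor expansion of $\det A$ rather than from $\det\gamma_6=1$). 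If you state and justify that determinant relation once, your proof is complete; the trade-off is that the paper's version depends only on the integrality congruences and the explicitly listed hypotheses, while yours depends on the normalization of the parametrization.
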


  \begin{proof}
    Substituting \eqref{A11} into \eqref{A12} and multiplying both sides by $\bar{x_1}$, we obtain \eqref{1}. For \eqref{2}, it follows from multiplying both sides of \eqref{A32} by $\bar{y_4}$.
    
    Substituting the obtained conditions on $d_3u_2$ into \eqref{A33} yields \eqref{3} as follows.
    \begin{eqnarray*}
      d_3u_2y_5+d_4d_5x_6y_3-d_3v_2 &\equiv&0 \pmod{d_4d_5}\\
      d_3v_2 &\equiv& d_2d_3\bar{y_4}y_5 \pmod{d_4}.
    \end{eqnarray*}

    For \eqref{4}, we use the assumption $x_3y_3\equiv1 \pmod{d_1d_2d_3d_4d_5f}$. Substituting \eqref{A21} into \eqref{A24} and multiplying both side by $d_2$, the term $d_2d_3u_2$ is expressed as
    \begin{equation}\label{5}
      d_2d_4(d_5v_1+d_1d_3x_5y_6)\equiv d_2d_3u_2 \cdot  y_6u_1+d_2d_4d_5y_3u_1 \pmod{d_2d_3}.
    \end{equation}
    Next, we consider the conditions on $d_2d_3u_2$. Substituting \eqref{A21} into \eqref{A11} and applying the assumption yields the following:
    \begin{eqnarray*}
      d_2d_4d_5x_6-d_2d_3x_3u_2-d_2d_4d_5x_3x_6y_3 &\equiv& 0 \pmod{d_2d_3}\\
      d_2d_3u_2 &\equiv& 0 \pmod{d_2d_3}.
    \end{eqnarray*}
    Substituting the above condition into \eqref{5}, and then applying $d_2d_3u_1$, we obtain \eqref{4} as
    \begin{eqnarray*}
       d_2d_3d_4(d_5v_1+d_1d_3x_5y_6) &\equiv& d_2d_3u_1\cdot d_4d_5y_3 \pmod{d_2d_3}\\
       d_2d_3d_4(d_5v_1+d_1d_3x_5y_6) &\equiv& d_2^2d_3d_4d_5\bar{x_1} \pmod{d_2d_3}.
    \end{eqnarray*}
  \end{proof}

  \begin{thm}\label{thm5.10}
    Let $\mathbf{m}=\{m_1,m_2,m_3\},\ \mathbf{n}=\{n_1,n_2,n_3\} \in \mathbb{Z}^3$. The fine Kloosterman sum $\mathcal{S}_{\omega_0}(\mathbf{m},\mathbf{n};D_1,D_2,D_3,f)=\mathcal{S}_{\omega_0}(\mathbf{m},\mathbf{n};d_4d_5,d_2d_3d_5,d_1d_3,f)$ with the notation in \eqref{D} becomes
    \begin{multline*}
    d_1^3d_2^2d_3^2d_4^2d_5^4f^4\\
    \times\sum_{\substack{x_3\pmod{d_3}\\y_5\pmod{d_5}}}S(m_1,(m_2d_1fx_3+n_3d_2d_5)/d_3f;d_1) \cdot S(n_1,(n_2d_4fy_5+m_3d_2d_3)/d_5f;d_4),
    \end{multline*}
    when it satisfies $m_2d_1\equiv0\pmod{d_2d_3},\ m_3\equiv0\pmod{d_5f}\ n_2d_4\equiv0\pmod{d_2d_3d_5}$ and $n_3\equiv0\pmod{d_1d_3d_4f}$.
  \end{thm}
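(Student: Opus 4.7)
I would follow the $SL_3$ template of \cite[Theorem 5.10]{KN}. Starting from the parametrization \eqref{5.6} and the explicit decomposition of Proposition~\ref{prop5.6}, the super-diagonal entries of $u_L$ and $u_R$ give
\begin{align*}
\psi_{\mathbf m}(u_L) &= e\!\left(\frac{m_1 x_1}{d_1}+\frac{m_2 u_1}{d_2 d_3}+\frac{m_3 T}{d_4 d_5 f}\right),\\
\psi_{\mathbf n}(u_R) &= e\!\left(\frac{n_1 y_4}{d_4}+\frac{n_2 v_2}{d_2 d_5}+\frac{n_3(d_5 v_1+d_1 d_3 x_5 y_6)}{d_1 d_3 f}\right),
\end{align*}
and the sum in \eqref{5.4} runs over $U(\mathbb{Z})$-double cosets of elements of $\Omega(D_1,D_2,D_3,f)$ constrained by the integrality relations \eqref{A11}--\eqref{A14}.

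\textbf{Reducing the character.} The four divisibility hypotheses are tailored so that each fraction above collapses to one with denominator $d_1$ or $d_4$. I would first use $m_3\equiv 0\pmod{d_5 f}$ together with congruence \eqref{2} of Lemma~\ref{prop5.7} to rewrite $m_3 T/(d_4 d_5 f)$ as $(m_3 d_2 d_3/(d_5 f))\bar{y_4}/d_4$, which combines with $n_1 y_4/d_4$ to build the $SL_2$-Kloosterman sum in $y_4$ modulo $d_4$; the $n_2 v_2/(d_2 d_5)$ term, reduced via $n_2 d_4\equiv 0\pmod{d_2 d_3 d_5}$ and \eqref{3}, then supplies the $y_5$-dependence of the second argument of that Kloosterman sum. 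Symmetrically, $m_2 d_1\equiv 0\pmod{d_2 d_3}$ together with \eqref{1} converts $m_2 u_1/(d_2 d_3)$ into the $\bar{x_1}$-contribution of the first Kloosterman sum, while $n_3\equiv 0\pmod{d_1 d_3 d_4 f}$ combined with \eqref{4} provides the constant piece $n_3 d_2 d_5/(d_3 f)$ appearing in its second argument.

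\textbf{Factorization and counting.} After these substitutions the exponent splits as a sum of a term depending only on $(x_1,x_3)$ modulo $d_1$ and a term depending only on $(y_4,y_5)$ modulo $d_4$, so the double sum factors as the advertised product of two $SL_2$-Kloosterman sums, with $x_3\pmod{d_3}$ and $y_5\pmod{d_5}$ as outer summation variables. The remaining parameters $x_5,x_6,y_3,y_6,u_1,u_2,v_1,v_2,S$ and the residual super-diagonal entries $b_{ij}$ are each either free in an explicit residue class or pinned down by one of \eqref{A11}--\eqref{A14}; tracking their multiplicities produces the prefactor $d_1^3 d_2^2 d_3^2 d_4^2 d_5^4 f^4$.

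\textbf{Main obstacle.} The hardest step is the combinatorial bookkeeping: precisely identifying which of the many parameters are free modulo what, how the $U(\mathbb{Z})\backslash\,\cdot\,/U(\mathbb{Z})$ quotient removes the redundancy in \eqref{5.6}, and verifying that all the congruences \eqref{A11}--\eqref{A14} are simultaneously consistent with the substitutions of Lemma~\ref{prop5.7}. Extracting the exact prefactor $d_1^3 d_2^2 d_3^2 d_4^2 d_5^4 f^4$ is particularly delicate, since a single miscount in any of the intermediate variables would spoil the identity; this is the step where careful imitation of the $SL_3$ analysis of \cite{KN} is most essential.
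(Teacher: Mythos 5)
Your plan reproduces the paper's own argument: the authors likewise read the superdiagonal entries off Proposition~\ref{prop5.6}, substitute via the congruences \eqref{1}--\eqref{4} of Lemma~\ref{prop5.7} so that the exponential splits into a sum over $x_1$ with $\bar{x_1}$ modulo $d_1$ and a sum over $y_4$ with $\bar{y_4}$ modulo $d_4$, with $x_3\pmod{d_3}$ and $y_5\pmod{d_5}$ as outer variables, and they extract both the stated congruence conditions and the prefactor from the remaining free parameters. The one step you flag as delicate --- the count yielding $d_1^3d_2^2d_3^2d_4^2d_5^4f^4$ --- is carried out in the paper by four geometric sums such as $\sum_{k_1=0}^{d_4d_5f-1}e\left(\frac{m_2d_1}{d_2d_3}k_1\right)$, which vanish unless the congruence conditions hold and whose lengths multiply to exactly that prefactor.
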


  \begin{proof}
  We calculate using the definition of the fine Kloosterman sum along with the notation in \eqref{D} as follows:
  \begin{eqnarray*}
    \mathcal{S}_{\omega_0}(\mathbf{m},\mathbf{n};D_1,D_2,D_3,f) &=& \mathcal{S}_{\omega_0}(\mathbf{m},\mathbf{n};d_4d_5,d_2d_3d_5,d_1d_3,f)\\
    &=& \sum_{\substack{A\in U(\mathbb{Z})\backslash\Omega(d_4d_5,d_2d_3d_4,d_1d_3,f)/U(\mathbb{Z})\\A=u_L\omega t(d_4d_5f,d_2d_3d_5f,d_1d_3f)u_R}} \psi_{\mathbf{m}}(u_L)\psi_{\mathbf{n}}(u_R).
  \end{eqnarray*}
  Let us express superdiagonal elements in the unipotent factors of the Bruhat decomposition in terms of these coordinates as in Proposition \ref{prop5.6}. Then $\psi_{\mathbf{m}}(u_L)\psi_{\mathbf{n}}(u_R)$ becomes\\
    \begin{equation*}
    e\left(\frac{m_1x_1}{d_1}+\frac{m_2u_1}{d_2d_3}+\frac{m_3(d_3u_2+d_4d_5x_6y_3)}{d_4d_5f}+\frac{n_1y_4}{d_4}+\frac{n_2v_2}{d_2d_5}+\frac{n_3(d_5v_1+d_1d_3x_5y_6)}{d_1d_3f}\right).
  \end{equation*}
  Then using Lemma \ref{prop5.7}, we substitute the values of $u$ and $v$ into the above to compute\\
  \begin{multline*}
  \mathcal{S}_{\omega_0}(\mathbf{m},\mathbf{n};d_4d_5,d_2d_3d_5,d_1d_3,f)\\
  =\sum_{\substack{x_3\pmod{d_3}\\y_5\pmod{d_5}}}
  \sum_{\substack{x_1\pmod{d_1}\\x_1\bar{x_1}\equiv1\pmod{d_1}}}e\left(\frac{m_1x_1+\{(m_2d_1fx_3+n_3d_2d_5)/d_3f\}\bar{x_1}}{d_1}\right)\\
  \times\sum_{\substack{y_4\pmod{d_4}\\y_4\bar{y_4}\equiv1\pmod{d_4}}}e\left(\frac{n_1y_4+\{(n_2d_4fy_5+m_3d_2d_3)/d_5f\}\bar{y_4}}{d_4}\right)\\
  \times\sum_{k_1=0}^{d_4d_5f-1}e\left(\frac{m_2d_1}{d_2d_3}k_1\right)
  \sum_{k_2=0}^{d_1d_2d_3d_5f-1}e\left(\frac{m_3}{d_5f}k_2\right)\\
  \times\sum_{k_3=0}^{d_1d_2d_3d_5f-1}e\left(\frac{n_2d_4}{d_2d_3d_5}k_3\right)
  \sum_{k_4=0}^{d_1d_4d_5f-1}e\left(\frac{n_3}{d_1d_3d_4f}k_4\right).
  \end{multline*}

    Here, the congruence conditions on Lemma \ref{prop5.7} is arranged so that the modulus is $d_1d_2d_3d_4d_5f$. 
    Summation over $k_i$ gives the congruence condition is
    $m_2d_1\equiv0\pmod{d_2d_3},\ m_3\equiv0\pmod{d_5f},\ n_2d_4\equiv0\pmod{d_2d_3d_5},\ n_3\equiv0\pmod{d_1d_3d_4f},$
    and the product of two classical Kloosterman sums as follows:
    \begin{multline*}
    d_1^3d_2^2d_3^2d_4^2d_5^4f^4\\
    \times\sum_{\substack{x_3\pmod{d_3}\\y_5\pmod{d_5}}}S(m_1,(m_2d_1fx_3+n_3d_2d_5)/d_3f;d_1) \cdot S(n_1,(n_2d_4fy_5+m_3d_2d_3)/d_5f;d_4).
    \end{multline*}
  \end{proof}

  From \eqref{fine}, using Theorem \ref{thm5.10} and the Weil bound in \eqref{weil}, we can obtain a bound on the $SL_4$-long word Kloosterman sum.

  \begin{prop}
    For given $\mathbf{m},\mathbf{n}\in\mathbb{Z}^3$, and $c_1,c_2,c_3>0$, we may bound the long-word Kloosterman sum as
    $$|S_{\omega_0}(\mathbf{m},\mathbf{n};(c_1,c_2,c_3))|\leq c_1^3c_2^2c_3^2(c_2+1)(m_1,c_3)^{\frac{1}{2}}(n_1,c_1)^{\frac{1}{2}}\sqrt{c_1c_3}\tau((c_1,c_2,c_3))\tau(c_1)\tau(c_3),$$
    where $\tau(c)$ is the divisor function.
  \end{prop}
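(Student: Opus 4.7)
The plan is to combine three tools already in place: the decomposition \eqref{fine} of the coarse Kloosterman sum into fine sums, the explicit formula for the fine sum from Theorem~\ref{thm5.10}, and the classical Weil bound \eqref{weil} applied to the two resulting $SL_2$-Kloosterman sums.

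First I apply \eqref{fine} together with the triangle inequality to get
$$|S_{\omega_0}(\mathbf{m},\mathbf{n};(c_1,c_2,c_3))|\le\sum_{f\mid(c_1,c_2,c_3)}\bigl|\mathcal{S}_{\omega_0}(\mathbf{m},\mathbf{n};c_1/f,c_2/f,c_3/f,f)\bigr|,$$
which has at most $\tau((c_1,c_2,c_3))$ terms, contributing one of the three divisor-function factors in the target. For each admissible $f$, Theorem~\ref{thm5.10} expresses the fine sum as the polynomial prefactor $d_1^3 d_2^2 d_3^2 d_4^2 d_5^4 f^4$ times a double sum over $x_3\!\pmod{d_3}$ and $y_5\!\pmod{d_5}$ of a product $S(m_1,\ast;d_1)\,S(n_1,\ast;d_4)$ (or vanishes when the stated congruence conditions on $\mathbf{m},\mathbf{n}$ fail). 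I estimate the inner double sum trivially by its cardinality $d_3 d_5\le d_2 d_3 d_5 f = c_2$, which produces the $(c_2+1)$ factor appearing in the target.

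Next I apply the Weil bound \eqref{weil} to the two classical Kloosterman sums,
$$|S(m_1,\ast;d_1)|\le(m_1,d_1)^{1/2}d_1^{1/2}\tau(d_1),\qquad |S(n_1,\ast;d_4)|\le(n_1,d_4)^{1/2}d_4^{1/2}\tau(d_4).$$
Because $d_1\mid c_3$ and $d_4\mid c_1$, I can replace $(m_1,d_1)\le(m_1,c_3)$, $(n_1,d_4)\le(n_1,c_1)$, $\tau(d_1)\le\tau(c_3)$, and $\tau(d_4)\le\tau(c_1)$, which accounts for the remaining two divisor functions and the two gcd factors.

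What remains is to control the polynomial prefactor $d_1^{7/2}d_2^2 d_3^2 d_4^{5/2} d_5^4 f^4$ (the original polynomial augmented by the $d_1^{1/2}d_4^{1/2}$ from Weil) by $c_1^3c_2^2c_3^2\sqrt{c_1c_3}=c_1^{7/2}c_2^2c_3^{5/2}$. I plan to do this by grouping factors via the three multiplicative relations $d_1d_3f=c_3$, $d_2d_3d_5f=c_2$, $d_4d_5f=c_1$; for example, writing $(d_1d_3f)^2(d_4d_5f)^2=c_1^2c_3^2$ leaves the residual $d_1^{3/2}d_2^2d_4^{1/2}d_5^2$, which is then bounded using the trivial inequalities $d_1\le c_3$, $d_4\le c_1$, $d_5\le c_1$, and $d_2\le c_2$. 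The main obstacle is exactly this bookkeeping step: the six parameters $d_1,\dots,d_5,f$ are subject to only three multiplicative constraints, so the grouping must be chosen with care in order to produce the claimed exponents of $c_1,c_2,c_3$, with the $(c_2+1)$ slack absorbing both the $(x_3,y_5)$ summation and any residual inefficiency from the grouping.
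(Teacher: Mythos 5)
Your overall route---expand via \eqref{fine} and the triangle inequality, insert Theorem \ref{thm5.10}, apply the Weil bound \eqref{weil} to the two classical factors, bound the $(x_3,y_5)$-sum by its cardinality, and convert the $d_i$ into the $c_j$ via \eqref{5.1} and \eqref{D}---is exactly the route the paper takes. The difficulty is the step you yourself flag as ``the main obstacle'': the final bookkeeping does not close, and no choice of grouping can make it close with the stated exponents. After Weil and the cardinality count $d_3d_5$, the per-$f$ polynomial factor is $d_1^{7/2}d_2^2d_3^3d_4^{5/2}d_5^5f^4$, while the available budget per $f$ is $c_1^{7/2}c_2^2c_3^{5/2}(c_2+1)$ with $c_1=d_4d_5f$, $c_2=d_2d_3d_5f$, $c_3=d_1d_3f$. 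Take $d_2=d_3=d_4=d_5=f=1$ and $d_1=N$: then $c_1=c_2=1$, $c_3=N$, the left-hand side is $N^{7/2}$ and the right-hand side is $2N^{5/2}$, so the required inequality fails for all large $N$. In your own notation, the residual $d_1^{3/2}d_2^2d_4^{1/2}d_5^2$ would have to be at most $c_1^{3/2}c_2^2c_3^{1/2}$, but $d_1^{3/2}=c_3^{3/2}\not\le c_3^{1/2}$ in this case. The root cause is that $d_1$ enters $c_3$ only to the first power yet enters the prefactor of Theorem \ref{thm5.10} to the power $3$ (hence $7/2$ after Weil), so a $c_3$-exponent of $2+\tfrac12$ cannot absorb it.

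To be fair, the paper's own proof is no more careful: it writes the same intermediate inequality and then asserts that ``each $d$ can be bounded above by $c$,'' which runs into precisely the same obstruction, so you have not overlooked an idea that the paper actually supplies. What this route \emph{does} yield is a bound with a larger $c_3$-exponent: since $c_1^{7/2}c_2^2c_3^{7/2}=d_1^{7/2}d_2^2d_3^{11/2}d_4^{7/2}d_5^{11/2}f^{12}$ dominates $d_1^{7/2}d_2^2d_3^3d_4^{5/2}d_5^5f^4$ factor by factor, one obtains the proposition with $c_3^2$ replaced by $c_3^3$ (and the $(c_2+1)$ factor then becomes unnecessary, as the $d_3d_5$ count is already absorbed). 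As stated, however, the exponents cannot be reached from Theorem \ref{thm5.10} and \eqref{weil} alone, and your proposal does not supply the additional input that would be required.
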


  \begin{proof}
    For the given decomposition of a fine Kloosterman sum as a sum of classical Kloosterman sums in Theorem \ref{thm5.10}, using the Weil bound in \eqref{weil} on the individual sums with attention to \eqref{fine}, we obtain
    \begin{multline*}
      |S_{\omega_0}(\mathbf{m},\mathbf{n};(c_1,c_2,c_3))| \leq \sum_{f|(c_1,c_2,c_3)}d_1^3d_2^2d_3^2d_4^2d_5^4f^4\\ \times\sum_{\substack{x_3\pmod {d_3}\\y_5\pmod{d_5}}} (m_1,d_1)^{\frac{1}{2}} (n_1,d_4)^{\frac{1}{2}} \sqrt{d_1d_4} \tau(d_1)\tau(d_4)
    \end{multline*}
    Each $d$ can be bounded above by $c$ with \eqref{5.1} and \eqref{D}, and the number of divisors $f$ is bounded by the divisor function.
  \end{proof}

  \subsection{Parametrization of other group}

  We now consider the case of the symplectic group $Sp(4,\mathbb{Z})$. The symplectic group is the group consisting of all $2n\times 2n$ symplectic matrices that preserve the standard symplectic form, with matrix multiplication as the group operation, and is defined as follows:
  \begin{equation}
    Sp(2n,\mathbb{Z})=\{ A\in GL_{2n}(\mathbb{Z})\ |\ A^\top JA=J\},
  \end{equation}
  where $A^\top$ is the transpose of $A$. $J$ is defined as
  $$J=\begin{pmatrix}
    0 & I_n\\
    -I_n & 0
  \end{pmatrix},$$
  and $I_n$ is the identity matrix.

  For example, $Sp(2,\mathbb{Z})$ is the group of $2\times 2$ symplectic matrices, which coincides with the special linear group $SL_2(\mathbb{Z})$. That is,
  $$Sp(2,\mathbb{Z})=SL_2(\mathbb{Z})=\{A\in GL_2(\mathbb{Z})\ |\ \det(A)=1\}.$$
  For $n\geq2$, it is convenient to write matrices in block form. For $n=2$ this block form is given by
  \begin{equation}
    A=\begin{pmatrix}\label{Sp4}
    P & Q\\
    R & S
  \end{pmatrix},
  \end{equation}
  where $P,Q,R,S\in M_2(\mathbb{Z})$, and the symplectic condition $A^\top JA=J$ with J=$\bigl(\begin{smallmatrix}
    0 & I_n\\
    -I_n & 0
  \end{smallmatrix}\bigl)$ is equivalent to the following conditions:
  \begin{equation*}
    P^\top R=R^\top P,\quad Q^\top S=S^\top Q,\quad P^\top S-R^\top Q=I_2.
  \end{equation*}
  By applying \eqref{Sp4} to \eqref{A} we obtain the following conditions on the minors of $A$. In the case of $Sp(4,\mathbb{Z})$, we have following conditions:
  \begin{eqnarray*}
    M_{13,12}+M_{24,12}=0,\quad M_{13,34}+M_{24,34}=0,\\
    M_{13,13}+M_{24,13}=1,\quad M_{13,14}+M_{24,14}=0,\\
    M_{13,23}+M_{24,23}=0,\quad M_{13,24}+M_{24,24}=1.
  \end{eqnarray*}

  We now consider the orthogonal group $O(n)$. It has a subgroup consisting of all matrices with determinant $1$, called the special orthogonal group, denoted by $SO(n)$. It is defined as
  \begin{equation*}
    SO(n)=\{A\in GL_n(\mathbb{Z})\ |\ A^\top A=I_n,\ \det(A)=1\}.
  \end{equation*}
  From this definition, we can derive the conditions on the entries of a matrix $A$. In the case of $SO(4,\mathbb{Z})$, we have following conditions:
  \begin{eqnarray*}
  A_{1i}^2+A_{2i}^2+A_{3i}^2+A_{4i}^2&=&1,\quad (1\leq i\leq4),\\
  A_{11}A_{1i}+A_{21}A_{2i}+A_{31}A_{3i}+A_{41}A_{4i}&=&0,\quad (2\leq i\leq4),\\
  A_{12}A_{1i}+A_{22}A_{2i}+A_{32}A_{3i}+A_{42}A_{4i}&=&0,\quad (3\leq i\leq4),\\
  A_{13}A_{14}+A_{23}A_{24}+A_{33}A_{34}+A_{43}A_{44}&=&0.
\end{eqnarray*}

  Although both $Sp(4,\mathbb{Z})$ and $SO(4)$ have determinant $1$, but additional conditions are imposed on the minors and entries of $A$. Therefore, the definition of the Kloosterman set and its parametrization as in \eqref{5.6} are expected to be more complicated than in the case of $SL_4(\mathbb{Z})$.
  
\end{section}

\begin{section}{$SL_5$-long word Kloosterman sum}

In this section, we outline a partial study of the fine Kloosterman sum for $SL_5(\mathbb{Z})$ following the cases of $SL_3(\mathbb{Z})$ and $SL_4(\mathbb{Z})$.

\subsection{Kloosterman set and Kloosterman sum}

For a vector $\mathbf{c}=\{c_1,c_2,c_3,c_4\}\in\mathbb{Z}^4$, we define
  \begin{equation}\label{t(c)5}
    t(\mathbf{c})=\begin{pmatrix}
      c_1 & & & &\\
      & \frac{c_2}{c_1} & & &\\
      & & \frac{c_3}{c_2} & &\\
      & & & \frac{c_4}{c_3}&\\
      & & & & \frac{1}{c_4}
    \end{pmatrix},
  \end{equation}
  and
  \begin{equation*}\label{omega5}
    \Omega_{\omega}(\mathbf{c})=\{A\in SL_5(\mathbb{Z}) \ |\  A=u_L\omega t(\mathbf{c})u_R\in B\omega B,\ u_L,u_R\in U\}.
  \end{equation*}

Let $\mathbf{n}=\{n_1,n_2,n_3,n_4\}\in\mathbb{Z}^4$. We use the character $\psi_\mathbf{n}$ to denote
  \begin{equation*}
    \psi_{\mathbf{n}}(u)=e(n_1 u_{1,2}+n_2u_{2,3}+n_3u_{3,4}+n_3u_{4,5}).
  \end{equation*}

  For $\mathbf{m},\mathbf{n},\mathbf{c}\in\mathbb{Z}^4$, we define the usual (coarse) Kloosterman sum for $SL_5(\mathbb{Z})$ as
  \begin{equation*}
    S_\omega(\mathbf{m},\mathbf{n};\mathbf{c})=\sum_{\substack{A\in U(\mathbb{Z})\backslash\Omega_{\omega}(\mathbf{c})/U(\mathbb{Z})\\A=u_L\omega t(\mathbf{c})u_R}} \psi_{\mathbf{m}}(u_L)\psi_{\mathbf{n}}(u_R).
  \end{equation*}

For the case of $\omega=\omega_0$ with \eqref{w_0}, we write $A\in SL_5(\mathbb Z)\cap Bw_0B$ as
  \begin{equation*}\label{A5}
    A=\begin{pmatrix}
      A_{11} & A_{12} & A_{13} & A_{14} & A_{15}\\
      A_{21} & A_{22} & A_{23} & A_{24} & A_{25}\\
      A_{31} & A_{32} & A_{33} & A_{34} & A_{35}\\
      A_{41} & A_{42} & A_{43} & A_{44} & A_{45}\\
      A_{51} & A_{52} & A_{53} & A_{54} & A_{55}
    \end{pmatrix}.
  \end{equation*}
In addition, considering the decomposition $A\in SL_5(\mathbb{Z}) \cap Uw_0TU$ in \eqref{3.1}, the diagonal matrix $T$ becomes $t(\mathbf{c})$ in \eqref{t(c)5} when
  \begin{equation}\label{c5}
    c_1=t_1,\quad c_2=t_1t_2,\quad c_3=t_1t_2t_3,\quad c_4=t_1t_2t_3t_4.
  \end{equation}

  From \cite[Proposition 3.1]{KN}, we obtain
  \begin{equation}\label{t5}
    t_1=A_{51},\quad t_1t_2=M_{45,12},\quad t_1t_2t_3=M_{345,123},\quad t_1t_2t_3t_4=M_{2345,1234}.
  \end{equation}

  \begin{lem}\label{lem5.1}
    For a given $A\in SL_5(\mathbb Z)\cap Bw_0B$ as above, we have
$$\gcd(A_{51},A_{52},A_{53},A_{54})=\gcd(M_{2345,1234},M_{1235,1234},M_{1245,1234},M_{1345,1234}).$$
  \end{lem}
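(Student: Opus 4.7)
The plan is to mirror the two-direction divisibility argument used in the $SL_4$ analogue (Lemma 3.1) earlier in the paper. Set $f_1 = \gcd(A_{51},A_{52},A_{53},A_{54})$ and $f_2 = \gcd(M_{2345,1234},M_{1235,1234},M_{1245,1234},M_{1345,1234})$; the goal is to establish both $f_1 \mid f_2$ and $f_2 \mid f_1$.

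For $f_1 \mid f_2$, I would observe that all four $4\times 4$ minors appearing in $f_2$ have column set $\{1,2,3,4\}$ and a row set that contains $5$, so each of them has $[A_{51}\ A_{52}\ A_{53}\ A_{54}]$ as one of its rows. Expanding each minor along that row writes it as an integer linear combination of $A_{51},A_{52},A_{53},A_{54}$ with coefficients that are $3\times 3$ minors of $A$, so $f_1$ divides every generator of $f_2$, and hence $f_1 \mid f_2$.

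For $f_2 \mid f_1$, following the $SL_4$ argument, the idea is to use $\det A = 1$ to exhibit each $A_{5k}$ (for $k=1,2,3,4$) as a $4\times 4$ determinant whose entries are $4\times 4$ minors of $A$, arranged so that one column consists precisely of $(M_{2345,1234}, M_{1345,1234}, M_{1245,1234}, M_{1235,1234})$ up to signs. The underlying mechanism is Jacobi's identity for the adjugate: since $\det A = 1$, a $4\times 4$ minor of $\operatorname{adj}(A)$ with row set $I$ and column set $J$ (with $|I|=|J|=4$) equals, up to sign, the single entry $A_{J^c,\, I^c}$. Taking $I=\{1,\ldots,5\}\setminus\{k\}$ and $J=\{1,2,3,4\}$ yields $A_{5k}$ on one side and, on the other, a $4\times 4$ determinant of $4\times 4$ minors of $A$ whose row indexed by $5$ in $\operatorname{adj}(A)$ is exactly the list of the four generators of $f_2$ (up to sign). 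Expanding along that row then writes $A_{5k}$ as an integer linear combination of those four minors, giving $f_2 \mid A_{5k}$ for every $k\in\{1,2,3,4\}$ and therefore $f_2 \mid f_1$.

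The main obstacle is purely bookkeeping: writing out the four explicit $4\times 4$ determinants for $A_{51},A_{52},A_{53},A_{54}$ with correct signs, in a format that visibly places the four $f_2$-minors in a single column, in direct analogy with the three $3\times 3$ determinants displayed in the $SL_4$ proof. Conceptually, nothing beyond cofactor expansion and the hypothesis $\det A = 1$ is required.
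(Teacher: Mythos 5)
Your proposal is correct and is exactly the argument the paper uses for the $SL_4$ analogue (Lemma 3.1): one direction by expanding each minor along the row $[A_{51}\ A_{52}\ A_{53}\ A_{54}]$, the other by using $\det A=1$ and the Jacobi/adjugate identity to write each $A_{5k}$ as a determinant of complementary minors and then expanding along the line containing the four generators of $f_2$. The paper states the $SL_5$ lemma without proof, but your proposal is the intended direct generalization, and the remaining work is, as you say, only sign bookkeeping.
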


  We set
  \begin{equation*}
    f=\gcd(A_{51},A_{52},A_{53},A_{54})=\gcd(M_{2345,1234},M_{1235,1234},M_{1245,1234},M_{1345,1234}).
  \end{equation*}
  We define $D_1,D_2,D_3,D_4\in \mathbb{Z}_{>0}$  as follows using \eqref{c5} and \eqref{t5},
  \begin{equation}\label{5.15}
    c_1=A_{51}=D_1f,\ c_2=M_{45,12}=D_2f,\ c_3=M_{345,123}=D_3f,\ c_3=M_{2345,1234}=D_4f.
  \end{equation}
  Furthermore, we define the following set as the fine Kloosterman set
  \begin{eqnarray*}\label{5.2}
    \Omega(D_1,D_2,D_3,D_4,f)= \{A\in SL_5(\mathbb{Z})\ |\ \gcd(A_{51},A_{52},A_{53},A_{54})=f,\ A_{51}=D_1f,\quad\quad\quad\\ M_{45,12}=D_2f,\ M_{345,123}=D_3f,\ M_{2345,1234}=D_4f\}.
  \end{eqnarray*}

  \begin{dfn}
    For $\mathbf{m},\mathbf{n}\in\mathbb{Z}^4$, we define the fine Kloosterman sum for $SL_5$ as follows, using $D_1,D_2,D_3,D_4$ and $f$ defined above.
    \begin{equation}\label{5.45}
      \mathcal{S}_{\omega_0}(\mathbf{m},\mathbf{n};D_1,D_2,D_3,D_4,f)=\sum_{\substack{A\in U(\mathbb{Z})\backslash\Omega(D_1,D_2,D_3,D_4,f)/U(\mathbb{Z})\\A=u_L\omega t(D_1f,D_2f,D_3f,D_4f)u_R}} \psi_{\mathbf{m}}(u_L)\psi_{\mathbf{n}}(u_R).
    \end{equation}
  \end{dfn}

  Similarly as before, the (usual) coarse Kloosterman sum for $SL_5$ can thus be written as the sum
  \begin{equation*}\label{fine5}
    S_{\omega_0}(\mathbf{m},\mathbf{n};(c_1,c_2,c_3,c_4))=\sum_{f|(c_1,c_2,c_3,c_4)} \mathcal{S}_{\omega_0}\left(\mathbf{m},\mathbf{n};\dfrac{c_1}{f},\dfrac{c_2}{f},\dfrac{c_3}{f},\dfrac{c_4}{f},f\right).
  \end{equation*}

\subsection{Parametrization of fine Kloosterman cells}

In this subsection, we consider the parametrization of the fine Kloosterman cells. In the case of $SL_5$, for the simple roots $\alpha=(1,-1,0,0,0),\ \beta=(0,1,-1,0,0),\ \gamma=(0,0,1,-1,0),\ \delta=(0,0,0,1,-1)$ and $x=\left(
  \begin{smallmatrix}
   a&b\\
   c&d
   \end{smallmatrix}
   \right)\in GL_2$, we define
   \begin{equation*}
  \iota_\alpha(x)=\begin{pmatrix}
  a&b&&&\\
  c&d&&&\\
  &&1&&\\
  &&&1&\\
  &&&&1
\end{pmatrix},\quad \iota_\beta(x)=\begin{pmatrix}
  1&&&&\\
  &a&b&&\\
  &c&d&&\\
  &&&1&\\
  &&&&1
\end{pmatrix},
\end{equation*}
\begin{equation*}
  \iota_\gamma(x)=\begin{pmatrix}
  1&&&&\\
  &1&&&\\
  &&a&b&\\
  &&c&d&\\
  &&&&1
\end{pmatrix},\quad \iota_\delta(x)=\begin{pmatrix}
  1&&&&\\
  &1&&&\\
  &&1&&\\
  &&&a&b\\
  &&&c&d
\end{pmatrix}.
\end{equation*}

Using $\iota_*$ as above, we use $s_*$ to denote the following:
\begin{equation}\label{2.25}
  s_\alpha=\iota_{\alpha}\left(\begin{pmatrix}
  0 & -1\\
  1 & 0
\end{pmatrix}\right),\ s_\beta=\iota_{\beta}\left(\begin{pmatrix}
  0 & -1\\
  1 & 0
\end{pmatrix}\right),\ s_\gamma=\iota_{\gamma}\left(\begin{pmatrix}
  0 & -1\\
  1 & 0
\end{pmatrix}\right),\ s_\delta=\iota_{\delta}\left(\begin{pmatrix}
  0 & -1\\
  1 & 0
\end{pmatrix}\right).
\end{equation}

Using the reduced word decomposition $\omega_0=s_\alpha s_\beta s_\alpha s_\gamma s_\beta s_\alpha s_\delta s_\gamma s_\beta s_\alpha$ with the notation in \eqref{2.25}, we parametrize the fine Kloosterman sets $\Omega(D_1,D_2,D_3,D_4,f)$:
  \begin{equation}\label{5.65}
    A=\iota_{\alpha}(\gamma_1)\iota_{\beta}(\gamma_3)\iota_{\alpha}(\gamma_2)\iota_{\gamma}(\gamma_6)\iota_{\beta}(\gamma_5)\iota_{\alpha}(\gamma_4)\iota_{\delta}(\gamma_{10})\iota_{\gamma}(\gamma_9)\iota_{\beta}(\gamma_8)\iota_{\alpha}(\gamma_7),
  \end{equation}
  where
  \begin{equation*}
    \gamma_i=\begin{pmatrix}
      x_i & b_i\\
      d_i & y_i
    \end{pmatrix}\ (\text{if}\ 1\leq i \leq 9),\quad
    \gamma_{10}=\begin{pmatrix}
      x_{10} & b_{10}\\
      f & y_{10}
    \end{pmatrix}.
  \end{equation*}
  Then we can write
  \begin{equation}\label{D5}
    D_1=d_7d_8d_9,\quad D_2=d_4d_5d_6d_8d_9,\quad D_3=d_2d_3d_5d_6d_9,\quad D_4=d_1d_3d_6.
  \end{equation}

  From \cite[Proposition 3.1]{KN}, for $A\in SL_5(\mathbb{Z})\cap u_L\omega_0Tu_R$, we know that
  $$u_L=\begin{pmatrix}
    1 & \dfrac{\langle e^*_{1,3,4,5},Ae_{1,2,3,4}\rangle}{t_1t_2t_3t_4} & \dfrac{\langle e^*_{1,4,5},Ae_{1,2,3}\rangle}{t_1t_2t_3} & \dfrac{\langle e^*_{1,5},Ae_{1,2}\rangle}{t_1t_2} & \dfrac{\langle e^*_{1},Ae_{1}\rangle}{t_1}\\
    & 1 & \dfrac{\langle e^*_{2,4,5},Ae_{1,2,3}\rangle}{t_1t_2t_3} & \dfrac{\langle e^*_{2,5},Ae_{1,2}\rangle}{t_1t_2} & \dfrac{\langle e^*_{2},Ae_{1}\rangle}{t_1}\\
    & & 1 & \dfrac{\langle e^*_{3,5},Ae_{1,2}\rangle}{t_1t_2} & \dfrac{\langle e^*_{3},Ae_{1}\rangle}{t_1}\\
    & & & 1 & \dfrac{\langle e^*_{4},Ae_{1}\rangle}{t_1}\\
    & & & & 1
  \end{pmatrix}$$
  and
  $$u_R=\begin{pmatrix}
    1 & \dfrac{\langle e^*_{5},Ae_{2}\rangle}{t_1} & \dfrac{\langle e^*_{5},Ae_{3}\rangle}{t_1} & \dfrac{\langle e^*_{5},Ae_{4}\rangle}{t_1} & \dfrac{\langle e^*_{5},Ae_{5}\rangle}{t_1}\\
    & 1 & \dfrac{\langle e^*_{4,5},Ae_{1,3}\rangle}{t_1t_2} & \dfrac{\langle e^*_{4,5},Ae_{1,4}\rangle}{t_1t_2} & \dfrac{\langle e^*_{4,5},Ae_{1,5}\rangle}{t_1t_2}\\
    & & 1 & \dfrac{\langle e^*_{3,4,5},Ae_{1,2,4}\rangle}{t_1t_2t_3} & \dfrac{\langle e^*_{3,4,5},Ae_{1,2,5}\rangle}{t_1t_2t_3}\\
    & & & 1 & \dfrac{\langle e^*_{2,3,4,5},Ae_{1,2,3,5}\rangle}{t_1t_2t_3t_4}\\
    & & & & 1
  \end{pmatrix}.$$

  Here, we define 
  $$u_1=d_1x_2+d_2x_3y_1,\ u_2=d_2x_4+d_4x_5y_2,\ u_3=d_4x_7+d_7x_8y_4,\ u_4=d_6x_9y_5+d_9x_{10}y_6,$$
  $$v_1=d_1x_2y_3+d_2y_1,\ v_2=d_2x_4y_5+d_4y_2,\ v_3=d_4x_7y_8+d_7y_4,\ v_4=d_6x_9y_{10}+d_9x_5y_6.$$
  Computing the minors and the entries of $A$ using \eqref{5.65}, for $d_i\ (1\leq i\leq9),\ f$ as in \eqref{5.15} and \eqref{D5}, the Bruhat decomposition of $A\in SL_5(\mathbb{Z})\cap u_L\omega_0Tu_R$ can be written as 
  $$T=\begin{pmatrix}
  d_7d_8d_9f&&&&\\
  &\dfrac{d_4d_5d_6}{d_7}&&&\\
  &&\dfrac{d_2d_3}{d_4d_8}&&\\
  &&&\dfrac{d_1}{d_2d_5d_9}&\\
  &&&&\dfrac{1}{d_1d_3d_6f}
\end{pmatrix},$$

\begin{equation}\label{u_L5}
  u_L=\begin{pmatrix}
    1 & \dfrac{x_1}{d_1} & \dfrac{x_1u_1-d_2x_3}{d_1d_2d_3} & \dfrac{\langle e^*_{1,5},Ae_{1,2}\rangle}{d_4d_5d_6d_8d_9f} & \dfrac{\langle e^*_{1},Ae_{1}\rangle}{d_7d_8d_9f}\\
    & 1 & \dfrac{u_1}{d_2d_3} & \dfrac{\langle e^*_{2,5},Ae_{1,2}\rangle}{d_4d_5d_6d_8d_9f} & \dfrac{\langle e^*_{2},Ae_{1}\rangle}{d_7d_8d_9f}\\
    & & 1 & \dfrac{d_3u_2+d_4d_5x_6y_3}{d_4d_5x_6} & \dfrac{\langle e^*_{3},Ae_{1}\rangle}{d_7d_8d_9f}\\
    & & & 1 & \dfrac{d_5d_6u_3+d_7d_8u_4}{d_7d_8d_9f}\\
    & & & & 1
  \end{pmatrix}
\end{equation} and

\begin{equation}\label{u_R5}
  u_R=\begin{pmatrix}
    1 & \dfrac{y_7}{d_7} & \dfrac{y_8}{d_7d_8} & \dfrac{y_9}{d_7d_8d_9} & \dfrac{y_{10}}{d_7d_8d_9f}\\
    & 1 & \dfrac{v_3}{d_4d_8} & \dfrac{d_5y_9u_3+d_7d_8y_5}{d_4d_5d_8d_9} & \dfrac{\langle e^*_{4,5},Ae_{1,5}\rangle}{d_4d_5d_6d_8d_9f}\\
    & & 1 & \dfrac{d_8v_2+d_2d_5x_8y_9}{d_2d_5d_9} & \dfrac{\langle e^*_{3,4,5},Ae_{1,2,5}\rangle}{d_2d_3d_5d_6d_9f}\\
    & & & 1 & \dfrac{d_5d_9v_1+d_1d_3v_4}{d_1d_3d_6f}\\
    & & & & 1
  \end{pmatrix}.
\end{equation}

Now we consider the fine Kloosterman sum $$\mathcal{S}_{\omega_0}(\mathbf{m},\mathbf{n};D_1,D_2,D_3,D_4f)=\mathcal{S}_{\omega_0}(\mathbf{m},\mathbf{n};d_7d_8d_9,d_4d_5d_6d_8d_9,d_2d_3d_5d_6d_9,d_1d_3d_6,f)$$ with the notation in \eqref{D5}. From \eqref{u_L5} and \eqref{u_R5}, the superdiagonal elements in the unipotent factors of the Bruhat decomposition $\psi_{\mathbf{m}}(u_L)\psi_{\mathbf{n}}(u_R)$ in \eqref{5.45} becomes
\begin{multline*}
  e\left( \dfrac{m_1x_1}{d_1}+\dfrac{m_2u_1}{d_2d_3}+\dfrac{m_3(d_3u_2+d_4d_5x_6y_3)}{d_4d_5d_6}+\dfrac{m_4(d_5d_6u_3+d_7d_8u_4)}{d_7d_8d_9f} \right.\\
  \left.\dfrac{n_1y_7}{d_7}+\dfrac{n_2v_3}{d_4d_8}+\dfrac{n_3(d_8v_2+d_2d_5x_8y_9)}{d_2d_5d_9}+\dfrac{n_4(d_5d_9v_1+d_1d_3v_4)}{d_1d_3d_6f} \right).
\end{multline*}

\end{section}

\section*{Acknowledgment}
We are also sincerely grateful to Professor Eren Mehmet Kiral for his helpful comment and careful reading.



\bigskip
\noindent
\textsc{Suzuho Osonoe}\\
Graduate School of Science and Technology\\
Sophia University, \\
 7-1 Kioi-cho, Chiyoda-ku, Tokyo, 102-8554, Japan \\
 \texttt{s-osonoe-7m6@eagle.sophia.ac.jp}\\

\medskip

\noindent
\textsc{Maki Nakasuji}\\
Department of Information and Communication Science, Faculty of Science, \\
 Sophia University, \\
 7-1 Kioi-cho, Chiyoda-ku, Tokyo, 102-8554, Japan \\
 \texttt{nakasuji@sophia.ac.jp}\\
and\\
Mathematical Institute, \\
Tohoku University, \\
6-3 Aoba, Aramaki, Aoba-ku, Sendai, 980-8578, Japan \\


\begin{thebibliography}{9}

\bibitem{KN}
E.~M.~Kiral and M.~Nakasuji.
\newblock Parametrization of Kloosterman sets and $SL_3$-Kloosterman sums, {\it Advances in Mathmatics} 403 (2022) 108392.

\item
H. Iwaniec.
{\em Topics in Classical Automorphic Forms (Graduate Studies in Mathematics, volume 17)}, American Mathematical Society, (1997).

\bibitem{B}
D. Bump.
{\em Automorphic Forms on $GL(3,\mathbb{R})$}, Lecture Notes in Mathmatics, Springer-Verlag, Berlin Heidelberg New York Tokyo (1984).

\end{thebibliography}
\end{document}